\title{Effective Hasse principle for the intersection of two quadrics}
\author{Tony Quertier \\ Université de Caen Normandie, France \\ tony.quertier@unicaen.fr}
\theoremstyle{plain} \newtheorem{theorem}{Theorem}
\theoremstyle{plain} \newtheorem{algorithm}{Algorithm}
\theoremstyle{plain} \newtheorem{proposition}[theorem]{Proposition}
\theoremstyle{plain} \newtheorem{corollary}[theorem]{Corollary}
\theoremstyle{plain} \newtheorem{lemma}[theorem]{Lemma}
\theoremstyle{definition} \newtheorem{definition}{Definition}
\theoremstyle{remark} \newtheorem{remark}{Remark}
\theoremstyle{remark} \newtheorem{notation}{Notation}
\theoremstyle{definition} \newtheorem{hypothesis}{Hypothesis}
\def\QQ{\mathbb{Q}}
\def\CC{\mathbb{C}}
\def\RR{\mathbb{R}}
\def\ZZ{\mathbb{Z}}
\def\PP{\mathbb{P}}
\def\k*{k^{\ast}}
\def\q*{Q^{\ast}}
\DeclareMathOperator{\Id}{Id}
\DeclareMathOperator{\im}{Im}
\DeclareMathOperator{\re}{Re}
\def\GLn{GL_n}
\def\GL2{GL_2}
\DeclareMathOperator{\trace}{trace}
\begin{document}

 \maketitle
 
 \begin{abstract}
We consider a smooth system of two homogeneous quadratic equations over $\QQ$ in $n \geq 13$ variables. In this case, the Hasse principle
is known to hold, thanks to the work of Mordell in 1959. The only local obstruction is over $\RR$. In this paper, we give an explicit algorithm to decide whether a
nonzero rational solution exists, and if so, compute one.
 \end{abstract}

\section{Introduction} 

Let $F_1,\ldots, F_m$ be polynomials in the variables $x_1, \ldots, x_n$ with coefficients in a number field $K$. In the study of the 
resolution of the system $F_1(x_1 , \ldots ,x_n)= \ldots =F_m(x_1,\ldots,x_n)=0$, three very natural and well-studied problems are:

\begin{itemize}
 \item LP (= Local problems): Decide whether solutions exist in every completion of $K$ ($\RR$, $\CC$, $p$-adic fields, \ldots); 
   if so, compute them.

 \item HP (= Hasse Principle): If LP is true, show the existence of a solution  in $K$. Otherwise, there are clearly no 
  solutions in $K$.

 \item EHP (= Effective HP): If solutions exist in $K$, compute one.
\end{itemize}

In this paper, we consider a smooth system of two homogeneous quadratic equations over $K=\QQ$ in $n \geq 13$ variables. Before studying the case of
two equations, it is worth recalling what is known in the case of a single quadratic equation.

Let $q(x_1,\ldots,x_n)$ be a homogeneous quadratic form 
over $\QQ$ and $q(x_1,\ldots,x_n)=0$ the associated quadratic equation. 
The LP question was solved by the Chevalley-Warning theorem and by Hensel's lemma \cite{Ser}. The corresponding algorithms are 
particularly efficient. The Hasse-Minkowski theorem asserts that HP holds for a single quadratic equation. 

To solve EHP, Simon \cite{DS} and Castel \cite{PC} have written algorithms that quickly compute an explicit rational solution of 
$q(x_1,\ldots,x_n)=0$.
Consequently, for a single quadratic equation, we consider the three problems solved and now focus on the case of
two quadratic equations.

Let $q_0(x_1,\ldots,x_n)$, $q_1(x_1,\ldots,x_n)$ be two quadratic forms over $\QQ$.
Demyanov \cite{Demy} and Birch, Lewis and Murphy \cite{BLM} solved LP for $n \geq 9$. 
Many people have worked on the HP problem for two quadratic forms. Let us mention the most general results.
Mordell settled the case $n \geq 13$ in 1959 \cite{Mor59}. His result was lowered down to $n \geq 11$ by Swinnerton-Dyer
in 1964 \cite{SwD64}, and later to $n \geq 9$ by Colliot-Thélène, Swinnerton-Dyer and Sansuc
in 1987 \cite{CTSWS}. In 2006, Wittenberg
 \cite{OW2006} proved that, if we assume Schinzel's hypothesis and the finiteness of Tate-Shafarevich
groups of elliptic curves over number fields, then the HP holds as soon as $n \geq 6$.

To our knowledge, no work exists on the EHP problem for two quadratic equations.
In this paper, we give explicit algorithms to solve EHP for $n \geq 13$. A non-negligible part of our work is based on 
\cite{Mor59}. 

\bigskip 

In a preliminary part, we fix the notation and recall the notion of smoothness.

In part \ref{section:realqf}, we study the different signatures of the forms in the pencil, which govern the existence of a real solution. 
For this, we introduce the simultaneous block diagonalization of two quadratic forms and show the existence of a quadratic form
with a convenient signature. This leads to a simple algorithm that decides the existence of a real solution.

In part \ref{section:reduction}, we give some low-level algorithms to split off a quadratic form into hyperbolic planes 
over $\RR$ or $\QQ$. These rely on the ability to compute a solution for a single quadratic equation. Over $\QQ$, as already 
mentioned, we may use the algorithm of Castel \cite{PC}.

Part \ref{section:realsolution} is devoted to the computation of an explicit nontrivial real solution of the system.

In part \ref{section:changebasis}, using this real solution, we can construct a rational totally isotropic subspace for $q_0(x)$ such 
that $q_1(x)$ is indefinite over this subspace. 

In the last part \ref{section:rationalsolution}, we use this subspace to derive a nontrivial rational solution of the system.

\section{General notation}

Let $K \supset \QQ$ be a  field. Let $q_0$ and $q_1$ be two quadratic forms over $K$ in $n$ variables. 
Using the canonical basis of $K^n$ we have:
 
 \[
  q_0(x)=\sum_{i,j=1}^na_{ij}x_ix_j, \ q_1(x)=\sum_{i,j=1}^nb_{ij}x_ix_j
 \]
with $a_{ij}=a_{ji}$ and $b_{ij}=b_{ji}$. We write $Q_0=(a_{ij})$, $Q_1=(b_{ij})$ the associated symmetric matrices. 
For $x=(x_1,\ldots,x_n)$, we have \[ q_0(x)=xQ_0{}^tx, \ q_1(x)=xQ_1{}^tx. \]
We also use the notation $q_0(x,y)= x Q_0 {}^ty$ for the associated bilinear form.

Let 
 \[
 V=\{ x \in \PP^{n-1}\left( \overline{K}\right) \ |  \ q_0(x)=q_1(x)=0 \}
\]
be the projective variety defined by the two quadrics associated to $q_0$ and $q_1$.

To study the intersection of two quadrics, it is necessary to
study the pencil of quadrics through $V$. We denote by $\mathcal{P}_K$ the pencil of quadrics associated to the pair $(q_0,q_1)$, 
that is the family of quadrics $a_0 q_0 + a_1 q_1=0$ with $(a_0:a_1) \in \PP^1(K)$. In practice, we will mainly consider this pencil for
$K=\QQ$ and $K=\RR$. If $\det(Q_0)=0$, we replace $q_0$ by $a_0 q_0 + a_1 q_1$ for some 
$(a_0:a_1) \neq (0:1)$
to assure that $\det(Q_0) \neq 0$ and similarly for $Q_1$. From now on, we can set $\lambda= \frac{a_0}{a_1}$ and 
$\Delta(\lambda)= \det(\lambda Q_0+Q_1)$: this is a polynomial of degree exactly $n$ in $\lambda$.

 We denote by $[r,s]$ the \emph{signature} of a quadratic form $q$ in $n$ variables, where $r$ is the 
 positive component and $s$ the negative component. 
 If $\det(Q) \neq 0$, we have $r+s=n$, otherwise we have $r+s < n$.
 
  We denote by $\oplus$ the traditional \emph{orthogonal sum} for quadratic forms. Moreover, for two matrices $A$ and $B$, we define
 $A \oplus B$ the block diagonal matrix $ \begin{pmatrix} A & 0 \\ 0 & B\end{pmatrix}$.

The variety $V$ is \emph{smooth} if $q_0=0$ and $q_1=0$ intersect transversally \textit{i.e}. if the rank of the Jacobian of $q_0$ and $q_1$ 
is equal to $2$ at every point of $V$. 

 \begin{hypothesis}
  We say that two quadratic forms $q_0$ and $q_1$(resp. two symmetric matrices $Q_0$ and $Q_1$) defined over $K$, satisfy the \emph{hypothesis H} if $\det(Q_0) \neq 0$, $\det(Q_1) \neq 0$, and $V$ is
  smooth over $K$.
 \end{hypothesis}

We know, for example from \cite{Harris}, that the variety $V$ is smooth if and only if 
 $\Delta(\lambda)=\det( \lambda Q_0 + Q_1)$ has only simple roots in $\overline{K}$. We have therefore the equivalent formulation:

 \begin{hypothesis}
  We say that two quadratic forms $q_0$ and $q_1$ (resp. two symmetric matrices $Q_0$ and $Q_1$) defined over $K$, satisfy the \emph{hypothesis H} if $\det(Q_0) \neq 0$, $\det(Q_1) \neq 0$, and 
  $\Delta(\lambda)=\det( \lambda Q_0 + Q_1)$ has only simple roots in $\overline{K}$ .
 \end{hypothesis}

\section{Real quadratic forms} \label{section:realqf}

\subsection{Simultaneous diagonalization}

\begin{proposition}\label{Diag}
 Let $Q_0$ and $Q_1$ be two matrices of size $n$ satisfying Hypothesis H over $\RR$.
 Let $m$ be the number of real roots of $\Delta(\lambda)$. There exists a matrix $P \in \GLn(\RR)$ such that $PQ_0{}^tP$ is diagonal, with only $\pm 1$ on the diagonal, 
 and $PQ_1{}^tP$ is a block diagonal matrix, with $m$ first blocks of size $1$ and then $(n-m)/2$ blocks of size $2$ of the form
\[\begin{pmatrix}
a & b \\
b & -a
\end{pmatrix}. \] Furthermore, each such block in $PQ_1{}^tP$ is face to face with a block $\begin{pmatrix} 1 & 0 \\ 0 & -1 \end{pmatrix}$ in $PQ_0{}^tP$.
Algorithm \ref{diagonalisation} computes such a matrix $P$.
\end{proposition}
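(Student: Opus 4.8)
The plan is to study the pencil through the single $\RR$-linear endomorphism $A$ of $\RR^n$ uniquely determined by $q_0(Ax,y)=q_1(x,y)$ for all $x,y$; it exists because $q_0$ is nondegenerate ($\det Q_0\neq 0$) and it is self-adjoint for $q_0$ because $q_1$ is symmetric. In any basis the matrix of $A$ satisfies $\det(\lambda Q_0+Q_1)=\det(Q_0)\det(\lambda\Id+A)$, so the eigenvalues of $A$ are the negatives of the roots of $\Delta(\lambda)$, with the same multiplicities; by Hypothesis~H they are simple. Since $A$ is real, its spectrum consists of $m$ simple real eigenvalues $\mu_1,\dots,\mu_m$ together with $(n-m)/2$ conjugate pairs of simple non-real eigenvalues $\{\zeta_k,\bar\zeta_k\}$; in particular $n-m$ is even, as the statement requires.

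Next I would invoke the primary decomposition of $\RR^n$ for $A$: since all eigenvalues are simple, $\RR^n=\bigoplus_{j=1}^m L_j\oplus\bigoplus_{k=1}^{(n-m)/2}W_k$, where $L_j=\ker(A-\mu_j\Id)$ is a line and $W_k$ is the $2$-dimensional $A$-invariant plane attached to $\{\zeta_k,\bar\zeta_k\}$ (over $\CC$ it is the sum of the two eigenlines). Self-adjointness forces this sum to be $q_0$-orthogonal: for eigenvectors $u,v$ with distinct eigenvalues $\theta\neq\phi$ one gets $\theta\,q_0(u,v)=q_0(Au,v)=q_0(u,Av)=\phi\,q_0(u,v)$, hence $q_0(u,v)=0$; running this over $\CC$ for the various eigenlines and taking real and imaginary parts gives $L_i\perp L_j$, $L_j\perp W_k$ and $W_k\perp W_l$ for $i\neq j$, $k\neq l$. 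As $q_0$ is nondegenerate, it then restricts to a nondegenerate form on each $L_j$ and each $W_k$.

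Then I normalize each summand. On a line $L_j$, rescaling the basis vector makes $q_0|_{L_j}=(\pm1)$, and $Q_1v=\mu_jQ_0v$ gives $q_1|_{L_j}=\mu_j\,q_0|_{L_j}$, a $1\times1$ block; these $m$ lines provide the first $m$ blocks. On a plane $W_k$ the form $q_0|_{W_k}$ cannot be definite, since a self-adjoint operator for a definite form is diagonalizable over $\RR$ whereas $A|_{W_k}$ has the non-real eigenvalues $\zeta_k,\bar\zeta_k$; hence $q_0|_{W_k}$ has signature $[1,1]$ and, by Sylvester, there is a basis of $W_k$ with Gram matrix $\begin{pmatrix}1&0\\0&-1\end{pmatrix}$. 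In that basis $A|_{W_k}$ is self-adjoint for $\begin{pmatrix}1&0\\0&-1\end{pmatrix}$, which forces the Gram matrix $N$ of $q_1|_{W_k}$ to be symmetric, say $N=\begin{pmatrix}p&q\\q&r\end{pmatrix}$, and a short computation shows the non-real-eigenvalue condition is exactly $(p+r)^2<4q^2$ (in particular $q\neq0$). Finally, a hyperbolic rotation $R_t=\begin{pmatrix}\cosh t&\sinh t\\\sinh t&\cosh t\end{pmatrix}$ fixes $\begin{pmatrix}1&0\\0&-1\end{pmatrix}$ and replaces the trace $p+r$ of $N$ by $(p+r)\cosh 2t+2q\sinh 2t$, which can be made to vanish by taking $t$ with $\tanh 2t=-(p+r)/(2q)$ — admissible precisely under $(p+r)^2<4q^2$ — leaving the block $\begin{pmatrix}a&b\\b&-a\end{pmatrix}$ facing $\begin{pmatrix}1&0\\0&-1\end{pmatrix}$. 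Assembling the chosen bases of all the $L_j$ and $W_k$ as the rows of $P$, and permuting so the size-$1$ blocks come first, yields the desired $P$.

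Every step above is effective and is precisely what Algorithm~\ref{diagonalisation} performs: compute $A$, isolate the real and complex roots of $\Delta$, solve linear systems over the relevant number fields to obtain the $L_j$ and $W_k$, and extract the square roots needed for the Sylvester normalization and for $\cosh 2t,\sinh 2t$. The one delicate point is the plane case — showing that $q_0$ is necessarily indefinite on each $W_k$, and that the residual one-parameter (hyperbolic-rotation) freedom is exactly what is needed to reach the normal form $\begin{pmatrix}a&b\\b&-a\end{pmatrix}$; the line case and the orthogonality are routine.
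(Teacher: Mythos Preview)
Your proof is correct and follows the same overall architecture as the paper's: decompose $\RR^n$ into the $q_0$-orthogonal generalized eigenspaces of the pencil (lines for real roots, planes for complex-conjugate pairs), using the identity $\theta\,q_0(u,v)=q_0(Au,v)=q_0(u,Av)=\phi\,q_0(u,v)$ to obtain orthogonality. The one tactical difference is in the $2\times 2$ blocks. The paper takes as initial basis of each plane $W_k$ the real and imaginary parts of the complex eigenvector $v$; since $q_0(v,\bar v)=0$ (distinct eigenvalues), one gets $q_0(\re v,\re v)=-q_0(\im v,\im v)$, so both the $Q_0$- and $Q_1$-blocks are already traceless of the form $\begin{pmatrix}a&b\\b&-a\end{pmatrix}$. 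An \emph{orthogonal} rotation (Jacobi) then diagonalizes the $Q_0$-block while preserving both traces. You instead start from an arbitrary basis of $W_k$, first Sylvester-normalize $q_0|_{W_k}$ to $\mathrm{diag}(1,-1)$ after proving indefiniteness via the spectral theorem, and then use a \emph{hyperbolic} rotation in the isometry group of $\mathrm{diag}(1,-1)$ to kill the trace of the $Q_1$-block. Both routes are valid and of comparable cost; the paper's choice avoids the separate indefiniteness argument and the $\tanh 2t$ computation, while yours makes the role of the stabilizer $O(1,1)$ more transparent. One small inaccuracy: your final paragraph asserts that Algorithm~\ref{diagonalisation} performs \emph{your} steps, but in fact it implements the paper's variant (real/imaginary parts followed by Jacobi), not Sylvester plus hyperbolic rotation.
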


\begin{algorithm}\label{diagonalisation}
 Let $Q_0$ and $Q_1$ be two matrices of size $n$ satisfying Hypothesis H over $\RR$.
 This algorithm computes a matrix $P \in \GLn(\RR)$ satisfying the conclusion of Proposition \ref{Diag}.
 \end{algorithm}

\begin{ttfamily} 
 \begin{enumerate}
  \item Let $\Delta(\lambda)= \det(\lambda Q_0 + Q_1)$ and $\Lambda=\{\lambda_1,\ldots,\lambda_n\}$ be the list of the roots of $\Delta(\lambda)$ such that 
        $\lambda_1,\ldots,\lambda_m \in \RR$ and $\lambda_{m+i}=\overline{\lambda_{m+i+1}}$ for $i \geq 1$ odd.
  \item For $i$ from $1$ to $n$, find a generator $v_i$ of $\ker ( \lambda_i Q_0 + Q_1)$ such that, for $i \leq m$, $v_i \in \RR^n$.
  \item Set $j=m+1$. While $j < n$, set $w_j=\re(v_j)$, $w_{j+1}= \im(v_j)$ and $j=j+2$.
  \item For $i$ from $1$ to $m$, set $w_i=v_i$. 
  \item Let $P$ be the square matrix of size $n$ whose the $i$-th line is $w_i$, for $1 \leq i \leq n$. Set $Q'_0=PQ_0{}^tP$. 
  \item For $i$ odd from $1$ to $n-m-1$, apply the Jacobi's eigenvalues algorithm to diagonalize the block $(Q'_{0_{k,l}})_{m+i \leq k,l \leq m+i+1} $ and denote by $P_{m+i}'$ 
        the associated transformation matrix of size $2$.
  \item Set $P'=\Id(m) \oplus \displaystyle\bigoplus_i P_{m+i}'$. Set $Q''_0=P'Q'_0{}^tP'$ and $P=P'P$.
  \item For $i$ from $1$ to $n$, divide the $i$-th line of $P$ by $\sqrt{|Q_{0_{ii}}''|}$.
  \item Return $P$.
 \end{enumerate}
 \end{ttfamily}

\begin{proof}
In Step $2$, the dimension of each kernel is $1$ because $\Delta(\lambda)$ has only simple roots.
 We know that \[v_i(\lambda_iQ_0) + v_iQ_1=0\] then we have:
 \[ \begin{array}{rcl}
    q_1(v_i,v_j)&=&v_iQ_1{}^tv_j \\
     &=& v_i(-\lambda_iQ_0){}^tv_j  \\
     &=& -\lambda_i v_iQ_0{}^tv_j\\
     &=& -\lambda_i q_0(v_i,v_j)\\
    \end{array} \]
but we also have:
\[ \begin{array}{rcl}
    q_1(v_i,v_j)=-\lambda_j q_0(v_i,v_j).  \\
    \end{array} \] 
We have $\lambda_i \neq \lambda_j$ for $i \neq j$,  so we deduce that $q_0(v_i,v_j)=q_1(v_i,v_j)=0$. Since the $v_i$ are orthogonal for $q_0$ and $q_1$, the $w_i$ are also pairwise orthogonal for $q_0$ and $q_1$, except maybe
$w_{m+i}$ and $w_{m+i+1}$, for $i$ odd.
The matrices $Q_0'$ and $Q_1'$ are therefore block diagonal with blocks of size $1$ for each real root $\lambda$ and of size $2$ for each pair of conjugate complex roots.
For $i$ odd, from  the equality $q_0(v_{m+i},v_{m+i+1})=0$
  we deduce that $q_0(w_{m+i},w_{m+i})=-q_0(w_{m+i+1},w_{m+i+1})$. 
 So, the shape of the blocks of size $2$ associated to conjugate complex roots is:
 \[\begin{pmatrix}
a & b \\
b & -a
\end{pmatrix}. \]
The same is true for $Q_1'$.
In Step $8$, The Jacobi's eigenvalues algorithm computes an orthogonal matrix $P_{m+i}'$ to diagonalize the blocks $A_i=(Q'_{0_{k,l}})_{m+i \leq k,l \leq m+i+1}$. We consider $B_i=(Q'_{1_{k,l}})_{m+i \leq k,l \leq m+i+1}$. 
Since, $P'_{m+i}$ is orthogonal, we have:
\[\trace(P'_{m+i}A_i{}^tP'_{m+i})=\trace(A_iP'_{m+i}{}^tP'_{m+i})=\trace(A_i)=0\]  and then the shape of the blocks $P'_{m+i}A_i{}^tP'_{m+i}$ is always:
  \[\begin{pmatrix}
a & 0 \\
0 & -a
\end{pmatrix}. \]
Similarly, the shape of the $P'_{m+i}B_i{}^tP'_{m+i}$ is
  \[\begin{pmatrix}
c & d \\
d & -c
\end{pmatrix}. \]
At the level of blocks, Step 8 divides the two lines of $P'_{m+i}$ by the same constant $|a|$ therefore, the trace of the blocks is always zero.

\end{proof}

\subsection{Existence of a balanced quadratic form}

\begin{definition}
 We say that a quadratic form with signature $[r,s]$ is \emph{balanced} if $|r-s| \leq 1$.
\end{definition}

In this part we want to determine if a pair of quadratic forms has non trivial real solutions. After this we study the existence of a balanced quadratic form
in the pencil $\mathcal{P_\RR}$ and compute one if it exists.

\begin{lemma}[(Cauchy's bound)]
Let $P(x)= x^n + a_{n-1} x^{n-1} \cdots + a_0$ be a monic polynomial of degree $n$. If $x \in \CC$ is a root of $P$ then
$|x| \leq 1 + \max_{1 \leqslant i \leqslant n}(| a_i |)$. The constant $a=1 + \max_{1 \leqslant i \leqslant n}(| a_i |)$ is called
\emph{Cauchy's bound} of $P$. 
 \end{lemma}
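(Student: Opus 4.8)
The plan is to reduce everything to the classical geometric-series estimate. First I would dispose of the trivial case: since the Cauchy constant $a = 1 + \max_i |a_i|$ is always at least $1$, any root $x$ with $|x| \le 1$ automatically satisfies $|x| \le a$. So from now on I may assume $|x| > 1$, which is the only regime in which there is anything to prove.

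Next, from $P(x) = 0$ I would isolate the leading monomial, writing $x^n = -\left( a_{n-1} x^{n-1} + \cdots + a_1 x + a_0 \right)$, and take absolute values. Setting $M = \max_i |a_i|$ and combining the triangle inequality with the bounds $|a_i| \le M$ gives
\[ |x|^n \le M\left( |x|^{n-1} + \cdots + |x| + 1 \right) = M \cdot \frac{|x|^n - 1}{|x| - 1}, \]
where the geometric summation is legitimate precisely because $|x| > 1$, so that $|x| - 1 \neq 0$. Since $|x|^n - 1 < |x|^n$ and $|x| - 1 > 0$, multiplying through yields $|x|^n\,(|x| - 1) < M\,|x|^n$, hence $|x| - 1 < M$, i.e. $|x| < 1 + M = a$, which is the claimed bound.

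This argument presents no genuine obstacle; the only subtlety worth flagging is the case distinction on whether $|x| > 1$, which is exactly what makes the division by $|x| - 1$ in the geometric-series step valid. Everything else is a routine manipulation of the triangle inequality.
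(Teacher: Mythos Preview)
Your proof is correct and is exactly the standard argument for Cauchy's bound. The paper itself does not supply a proof of this lemma; it is quoted as a classical fact and used only to furnish an interval containing all real roots of $\Delta(\lambda)$. There is therefore nothing to compare against, and your treatment is entirely adequate.
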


The next result follows easily from Proposition \ref{Diag}. We use the notation $r(\lambda_i^+)$ for $\lim_{\lambda \to \lambda_i, \lambda > \lambda_i} r(\lambda)$ and similarly for $r(\lambda_i^-)$ with $\lambda< \lambda_i$.

\begin{theorem}\label{2}
Let $Q_0$ and $Q_1$ be two matrices of size $n$ satisfying Hypothesis H over $\RR$. We denote by $\lambda_1 < \ldots < \lambda_m$ the real roots
of $\det(\lambda Q_0+Q_1)$, $Q_{\infty}=Q_0$ and $Q_{-\infty}=-Q_0$. We also denote by $[r(\lambda),s(\lambda)]$ the signature of 
$\lambda Q_0+Q_1$ with $\lambda \in [-\infty,\infty]$. Then :
 \begin{enumerate}
  \item The signature of $\lambda Q_0+Q_1$ is constant over the intervals $[-\infty,\lambda_1[$, $]\lambda_i,\lambda_{i+1}[$ and $]\lambda_m,\infty]$ for $i \in \{1,\ldots,m-1\}$. 
  \item $r(\lambda_i^+)-r(\lambda_i^-)=-(s(\lambda_i^+)-s(\lambda_i^-))$.
  \item $r(\lambda_i)=\frac{1}{2}(r(\lambda_i^+)+r(\lambda_i^-))$ and $s(\lambda_i)=\frac{1}{2}(s(\lambda_i^+)+s(\lambda_i^-))$.
 \end{enumerate}

\end{theorem}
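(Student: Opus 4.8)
The plan is to reduce everything to the normal form produced by Proposition~\ref{Diag} and then track what happens to the diagonal signs of $\lambda Q_0 + Q_1$ as $\lambda$ varies. After conjugating by the matrix $P$ of Proposition~\ref{Diag}, $Q_0$ becomes diagonal with entries $\pm1$ and $Q_1$ becomes block diagonal with $m$ blocks of size $1$ (one for each real root $\lambda_i$) and $(n-m)/2$ blocks of size $2$ of the traceless form $\begin{pmatrix} a & b \\ b & -a \end{pmatrix}$, each such $2\times2$ block sitting opposite a $\begin{pmatrix} 1 & 0 \\ 0 & -1 \end{pmatrix}$ block of $Q_0$. Since signature is conjugation-invariant, the signature of $\lambda Q_0 + Q_1$ can be read off from this block form, and the problem decouples across blocks.

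For a $2\times2$ block, $\lambda\begin{pmatrix} 1 & 0 \\ 0 & -1 \end{pmatrix} + \begin{pmatrix} a & b \\ b & -a \end{pmatrix}$ has determinant $-(\lambda+a)^2 - b^2 < 0$ for all real $\lambda$ (here $b \neq 0$, otherwise $\Delta$ would have a non-real root of the wrong type, contradicting Hypothesis~H), so it always contributes signature $[1,1]$ regardless of $\lambda$. Hence the $2\times2$ blocks contribute a constant $[\,(n-m)/2,\ (n-m)/2\,]$ to $[r(\lambda),s(\lambda)]$ and play no role in the variation. For a $1\times1$ block associated to the real root $\lambda_i$, the $Q_0$-entry is some $\varepsilon_i = \pm1$ and the $Q_1$-entry is some $c_i$; the block of $\lambda Q_0 + Q_1$ is the scalar $\varepsilon_i\lambda + c_i$, which vanishes exactly at $\lambda = -c_i/\varepsilon_i$. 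Because $\Delta(\lambda) = \det(\lambda Q_0 + Q_1)$ is (up to the nonzero constant coming from the $2\times2$ blocks) the product of these $m$ linear factors, and $\Delta$ has $m$ distinct real roots, the $m$ vanishing points $-c_i/\varepsilon_i$ are exactly $\lambda_1,\dots,\lambda_m$, each occurring once. Thus exactly one $1\times1$ block changes sign as $\lambda$ crosses $\lambda_i$, and away from the $\lambda_i$ every block keeps a constant nonzero sign: this gives statement~(1). When a single scalar $\varepsilon_i\lambda + c_i$ passes through $0$ it flips from $+$ to $-$ or vice versa, so one of $r,s$ goes up by $1$ and the other down by $1$, giving statement~(2). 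At $\lambda = \lambda_i$ itself the block contributes a zero entry, so $r(\lambda_i)$ equals the number of positive diagonal entries among the unaffected blocks, which is exactly the average $\tfrac12(r(\lambda_i^+)+r(\lambda_i^-))$, and likewise for $s$; this is statement~(3). The endpoint conventions $Q_{\pm\infty} = \pm Q_0$ are just the observation that for $|\lambda|$ large the sign of each $1\times1$ entry $\varepsilon_i\lambda + c_i$ agrees with $\sign(\lambda)\varepsilon_i$, and the signature on $]\lambda_m,\infty]$ (resp. $[-\infty,\lambda_1[$) is constant and equals that of $Q_0$ (resp. $-Q_0$).

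The only genuinely delicate point is making sure that crossing $\lambda_i$ flips \emph{exactly one} sign and not several at once: a priori two different $1\times1$ blocks could share the same vanishing point, and a $2\times2$ block could degenerate. Both are ruled out by Hypothesis~H, since a repeated real vanishing point among the $1\times1$ entries, or a $2\times2$ block with $b=0$ whose two scalars $\lambda\pm a$ could coincide or a block becoming singular at a real $\lambda$, would force $\Delta(\lambda)$ to have a multiple root in $\overline{\RR}$ — contradicting the simplicity of the roots of $\Delta$. Once this is in place, all three assertions are immediate bookkeeping on the diagonalized form, so I would present the argument in exactly this order: invoke Proposition~\ref{Diag}, dispense with the $2\times2$ blocks, match the $1\times1$ vanishing points with the roots of $\Delta$ using Hypothesis~H, and then read off (1), (2), (3).
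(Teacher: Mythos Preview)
Your proof is correct and follows exactly the approach the paper indicates: the paper merely states that the theorem ``follows easily from Proposition~\ref{Diag}'', and your argument is precisely the block-by-block bookkeeping that substantiates this remark. The key observations---that each $2\times2$ block of $\lambda Q_0+Q_1$ has determinant $-(\lambda+a)^2-b^2<0$ and hence constant signature $[1,1]$, while each $1\times1$ block is linear in $\lambda$ and vanishes at exactly one of the $\lambda_i$ by simplicity of the roots---are exactly what is needed, and your handling of the delicate point (ruling out simultaneous sign flips via Hypothesis~H) is appropriate.
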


\begin{corollary}\label{equilibre}
 There exists $\lambda \in \QQ$ such that $\lambda q_0 + q_1$ is \emph{balanced}.
\end{corollary}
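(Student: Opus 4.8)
The plan is to use Theorem \ref{2} to track how the signature $[r(\lambda), s(\lambda)]$ evolves as $\lambda$ runs over $[-\infty, \infty]$, and to argue that at some stage — either on an open interval between consecutive real roots, or at one of the roots $\lambda_i$ themselves — the difference $r(\lambda) - s(\lambda)$ must take a value in $\{-1, 0, 1\}$. First I would observe that at the two extremes the signatures are related by $[r(-\infty), s(-\infty)] = [s(\infty), r(\infty)]$, since $Q_{-\infty} = -Q_0 = -Q_\infty$; in particular $r(-\infty) - s(-\infty) = -(r(\infty) - s(\infty))$. So the quantity $d(\lambda) := r(\lambda) - s(\lambda)$ changes sign (weakly) between the far left and the far right.

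Next I would examine how $d(\lambda)$ can jump. By part (1) of Theorem \ref{2}, $d$ is constant on each of the open intervals $]\lambda_i, \lambda_{i+1}[$ (and on $[-\infty,\lambda_1[$ and $]\lambda_m,\infty]$). By part (2), crossing a single real root $\lambda_i$ changes $r$ by $\delta_i := r(\lambda_i^+) - r(\lambda_i^-)$ and changes $s$ by $-\delta_i$, so $d$ jumps by $2\delta_i$ between the interval on the left of $\lambda_i$ and the interval on its right — in particular $d$ changes by an \emph{even} integer at each root. Moreover, by part (3), the value at the root itself is the average, $d(\lambda_i) = \tfrac12(d(\lambda_i^-) + d(\lambda_i^+))$, which is exactly halfway between the two neighbouring interval values; so if $d$ jumps by $2\delta_i$ across $\lambda_i$, then $d(\lambda_i)$ sits exactly $\delta_i$ away from each side. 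I would then note that since $\det(\lambda_i Q_0 + Q_1) = 0$ while Hypothesis H forces simple roots, the form $\lambda_i Q_0 + Q_1$ has one-dimensional kernel, hence $\delta_i = \pm 1$ (the jump in $r$ across a simple root of $\Delta$ is exactly $\pm 1$); this already follows from the block structure in Proposition \ref{Diag}, where a real root contributes a single $1\times 1$ block whose sign flips. Thus at each root $d(\lambda_i)$ differs from each neighbouring interval value by exactly $1$, and in particular $d(\lambda_i)$ and, say, $d$ on the interval immediately to its left differ in parity.

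Now the finishing argument: consider the sequence of interval-values of $d$, say $d_0 = d(-\infty), d_1, \dots, d_m = d(\infty)$ on the $m+1$ open intervals (here $m \geq 1$, since $\Delta$ has degree $n \geq 13$ and, after the normalization of the excerpt, at least one real root — actually $n$ odd forces this, but in general if $m = 0$ then $r(\infty) = r(-\infty) = s(\infty)$ forces $d \equiv 0$, done). Each consecutive pair $d_{k-1}, d_k$ differs by the even integer $2\delta_{i}$; and $d_0 = -d_m$. If all $d_k$ are nonzero with $|d_k| \geq 2$, then since they are all even and $d_0 = -d_m$, somewhere in the list the sign must flip, i.e. there is a $k$ with $d_{k-1} \geq 2$ and $d_k \leq -2$ (or vice versa) — but then $|d_{k-1} - d_k| \geq 4$, while at the intervening root $\lambda_i$ we have $d(\lambda_i) = \tfrac12(d_{k-1} + d_k)$, which has absolute value $\leq \tfrac12|d_{k-1} - d_k|$ only if the signs match, contradiction unless in fact the jump $2\delta_i = \pm 2$, forcing $\{d_{k-1}, d_k\} = \{2, -2\}$ is impossible since the gap is $4 \neq 2$. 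The clean way to phrase it: because each jump has size exactly $2$ (as $\delta_i = \pm 1$) and $d_0, \dots, d_m$ are all even integers, moving from $d_0$ to $d_m = -d_0$ in steps of $\pm 2$ we must pass through $0$; hence some interval-value $d_k = 0$, equivalently $r = s$ on that whole open interval, and every rational $\lambda$ there gives a balanced form. (If instead $d_0 = 0$ we are already done.) The main obstacle is packaging this "intermediate value" step cleanly: one must be careful that the jumps across roots are genuinely $\pm 2$ and not larger — which is where I would lean explicitly on Hypothesis H via Proposition \ref{Diag} to guarantee simple roots and hence unit-size kernel jumps — and one must confirm that the relevant interval is nonempty and contains a rational point, which is automatic since each such interval is a nonempty open subinterval of $\RR$ (or of $[-\infty,\infty]$ with an endpoint at $\pm\infty$, where one just picks any sufficiently large rational, valid by Cauchy's bound on the roots of $\Delta$).
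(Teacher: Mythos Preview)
Your approach is exactly the elaboration the paper leaves implicit: the paper gives no standalone proof of Corollary~\ref{equilibre}, treating it as immediate from Theorem~\ref{2}, and the underlying idea (that $d(\lambda_{\max})=-d(\lambda_{\min})$, so a bisection must find a balanced $\lambda$) appears only in the one-line justification of Algorithm~\ref{1}. Your discrete intermediate-value argument on the sequence of interval values $d_0,\dots,d_m$, with jumps of size exactly $2$ guaranteed by Hypothesis~H via Proposition~\ref{Diag}, is precisely the right way to make this rigorous.

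There is one slip to fix. You assert that ``$d_0,\dots,d_m$ are all even integers'', but on any interval where $\Delta(\lambda)\neq 0$ we have $r(\lambda)+s(\lambda)=n$, so $d_k\equiv n\pmod 2$: the interval values are all even only when $n$ is even. When $n$ is odd they are all odd, and your step-of-$\pm 2$ walk from $d_0$ to $d_m=-d_0$ then forces some $d_k=\pm 1$ rather than $d_k=0$. Either way the conclusion $|d_k|\leq 1$ (balanced) holds on a nonempty open interval, hence at a rational $\lambda$, so the argument goes through unchanged once you split on the parity of $n$. Everything else---the handling of $m=0$, the appeal to simple roots for $\delta_i=\pm 1$, and the density of $\QQ$ in the open interval---is fine.
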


\begin{definition}
 We define the function $d : \RR \rightarrow \ZZ$ by 
 $d(\lambda)=r-s$, where  $[r,s]$ is the signature of $\lambda q_0 + q_1$.
\end{definition}

We can reformulate Theorem \ref{2} using the function $d$.

\begin{corollary}\label{fonction}
 The function $d$ is piecewise constant with discontinuities at the roots of $\Delta(\lambda)$. The value of $d$ at a 
 discontinuity is the average of the two limit values of $d$ on the left and on the right of this discontinuity.
\end{corollary}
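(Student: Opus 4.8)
The plan is to read off both assertions directly from Theorem \ref{2}, after translating statements about the pair $(r,s)$ into statements about $d=r-s$. First I would invoke Theorem \ref{2}(1): the signature $[r(\lambda),s(\lambda)]$ of $\lambda q_0+q_1$ is constant on each of the open intervals $]-\infty,\lambda_1[$, $]\lambda_i,\lambda_{i+1}[$ for $1\le i\le m-1$, and $]\lambda_m,\infty[$ determined by the real roots $\lambda_1<\cdots<\lambda_m$ of $\Delta(\lambda)$. Since $d(\lambda)$ depends only on the signature of $\lambda q_0+q_1$, it too is constant on each of these intervals; hence $d$ is piecewise constant and each of its discontinuities is one of the $\lambda_i$, i.e. a real root of $\Delta$. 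In particular the one-sided limits $d(\lambda_i^{+})$ and $d(\lambda_i^{-})$ exist for every $i$ and equal the constant values of $d$ on the intervals immediately to the right and to the left of $\lambda_i$; these are the ``limit values of $d$'' referred to in the statement.

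It then remains to evaluate $d$ at each $\lambda_i$. Here I would use Theorem \ref{2}(3), which gives $r(\lambda_i)=\frac{1}{2}\big(r(\lambda_i^+)+r(\lambda_i^-)\big)$ and $s(\lambda_i)=\frac{1}{2}\big(s(\lambda_i^+)+s(\lambda_i^-)\big)$. Subtracting,
\[ d(\lambda_i)=r(\lambda_i)-s(\lambda_i)=\frac{1}{2}\big[(r(\lambda_i^+)-s(\lambda_i^+))+(r(\lambda_i^-)-s(\lambda_i^-))\big]=\frac{1}{2}\big(d(\lambda_i^+)+d(\lambda_i^-)\big), \]
which is exactly the assertion that the value of $d$ at a discontinuity is the average of its two neighbouring limit values. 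I would note in passing that Theorem \ref{2}(2) is not strictly needed for this argument — off the roots one has $r+s=n$, so the jumps of $r$ and $s$ across $\lambda_i$ are automatically opposite — although it does show that the jump $d(\lambda_i^+)-d(\lambda_i^-)=2\big(r(\lambda_i^+)-r(\lambda_i^-)\big)$ is an even integer.

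Since the whole argument is a one-line substitution into Theorem \ref{2}, there is no genuine obstacle. The only point deserving a word of care is matching the informal term ``discontinuity'' in the statement with the set of real roots of $\Delta$: that $d$ is locally constant away from the $\lambda_i$ is exactly Theorem \ref{2}(1), and restricting attention to the real roots is forced because $d$ is only defined on $\RR$.
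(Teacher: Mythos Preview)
Your argument is correct and is exactly the intended one: the paper gives no separate proof of this corollary, presenting it simply as a restatement of Theorem~\ref{2} in terms of $d=r-s$. Your substitution of Theorem~\ref{2}(1) and (3) into the definition of $d$ is precisely that translation.
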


It is convenient for the next lemma to use the notation $\lambda_0=-\infty$ and $\lambda_{n+1}=+\infty$.

\begin{lemma}\label{4}
 Let $Q_0$ and $Q_1$ be two matrices of size $n$ satisfying Hypothesis H over $\RR$.  Assume that $\Delta(\lambda)$ has $m \leq n$ real roots denoted by $\lambda_1 < \ldots < \lambda_m$. We have:
 \begin{enumerate}
  \item If $m \neq n$ then $\lambda Q_0 + Q_1$ is never definite.
  \item If $m=n$, there exists at most one interval $]\lambda_i,\lambda_{i+1}[$ over which $\lambda Q_0+Q_1$ is positive (resp. negative) definite. Moreover this interval is $]\lambda_s,\lambda_{s+1}[$ 
        (resp. $]\lambda_r,\lambda_{r+1}[$) where $[r,s]$ is the signature of $Q_0$.
 \end{enumerate}

\end{lemma}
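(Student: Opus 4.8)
The plan is to track how the signature $[r(\lambda),s(\lambda)]$ varies as $\lambda$ runs over $[-\infty,+\infty]$, using the description of the signature jumps from Theorem \ref{2}. The key bookkeeping device is the function $d(\lambda)=r(\lambda)-s(\lambda)$: by Corollary \ref{fonction}, $d$ is piecewise constant, jumps only at the roots of $\Delta(\lambda)$, and at each such root the jump is $\pm 2$ (since by Theorem \ref{2}(2) the change in $r$ is the negative of the change in $s$, and each root is simple so exactly one eigenvalue of $\lambda Q_0+Q_1$ changes sign as $\lambda$ crosses it). Meanwhile $r(\lambda)+s(\lambda)=n$ on each open interval (there $\det(\lambda Q_0+Q_1)\neq 0$), so on an open interval $\lambda Q_0+Q_1$ is positive definite iff $d=n$ and negative definite iff $d=-n$.

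For part (1): if $m\neq n$, i.e. $\Delta(\lambda)$ has fewer than $n$ real roots, then at each real root $s(\lambda_i)<n$ because $r+s<n$ at a root (the matrix is singular there), but more to the point the total variation of $d$ available from the $m<n$ real crossings is bounded. Precisely, $d$ changes by at most $2m$ in total as $\lambda$ goes from $-\infty$ to $+\infty$; since $d(+\infty)=d(Q_0)$ and $d(-\infty)=d(-Q_0)=-d(Q_0)$, and these differ by $2r(Q_0)-2s(Q_0)$... the cleaner argument is: for $\lambda Q_0+Q_1$ to be definite on some interval we need $|d|=n$ there, but $|d(\lambda)|\le n-(\text{number of real roots not yet crossed, counted appropriately})$; since $m<n$ there are complex roots, and the eigenvalues of $\lambda Q_0+Q_1$ associated to the size-$2$ blocks of Proposition \ref{Diag} always have the form $\left(\begin{smallmatrix} a & b\\ b & -a\end{smallmatrix}\right)$, hence trace $0$, hence one positive and one negative eigenvalue for every real $\lambda$ — so at least $(n-m)/2$ positive and $(n-m)/2$ negative eigenvalues are present for all $\lambda$, forcing $r(\lambda)\ge (n-m)/2>0$ and $s(\lambda)\ge(n-m)/2>0$, so $\lambda Q_0+Q_1$ is never definite. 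This last observation is the crux and follows directly from the block structure in Proposition \ref{Diag}, so I would route the whole of (1) through it.

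For part (2): assume $m=n$, so all roots $\lambda_1<\cdots<\lambda_n$ are real and simple. Start at $\lambda=+\infty$ where the form is $Q_0$ with signature $[r,s]$, so $d(+\infty)=r-s$. As $\lambda$ decreases through each $\lambda_i$, $d$ jumps by $\pm 2$; the form is positive definite on an open interval exactly when $d=n$ there, equivalently when $r=n,s=0$. Going from $\lambda>\lambda_n$ down to $\lambda<\lambda_1$, $d$ starts at $r-s$ and ends at $d(-\infty)=d(-Q_0)=s-r$; since it moves by $\pm 2$ at each of the $n$ steps and the net change is $(s-r)-(r-s)=-2(r-s)$, and the extreme value $n$ can be attained, a short monotonicity/counting argument pins down that $d=n$ can hold on at most one interval, and by tracking the signs of the crossings (each crossing at $\lambda_i$ increases or decreases the positive index by exactly one) that interval must be the one bordered by the root indexed $s$ on one side — i.e. $]\lambda_s,\lambda_{s+1}[$; the negative-definite case is symmetric (apply the positive case to $-Q_0$, whose signature is $[s,r]$, or to the pencil $-\lambda Q_0-Q_1$). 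The main obstacle is bookkeeping the indexing: making precise \emph{which} crossing $\lambda_i$ flips \emph{which} sign, so that the interval comes out as $]\lambda_s,\lambda_{s+1}[$ and not merely "some interval". I expect to settle this by observing that $r(\lambda)$ counts, among the $n$ diagonal entries after simultaneous diagonalization, those that are positive, and that crossing $\lambda_i$ toggles exactly the $i$-th such entry in the ordering induced by the roots — then "$r$ of them positive at $+\infty$" together with the toggling pattern gives the interval explicitly.
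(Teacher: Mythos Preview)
The paper does not include a proof of this lemma; it is stated without argument, as are Theorem~\ref{2} and Corollaries~\ref{equilibre} and~\ref{fonction} in the same subsection (the author evidently regards all of them as direct consequences of the simultaneous block-diagonalisation in Proposition~\ref{Diag}). So there is no proof to compare against; I can only assess your plan on its own merits.

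Your argument for part~(1) is correct and is exactly the natural one. After the change of basis of Proposition~\ref{Diag}, each pair of complex-conjugate roots contributes a $2\times 2$ block of $P(\lambda Q_0+Q_1){}^tP$ of the form
\[
\begin{pmatrix}\lambda+a & b\\ b & -\lambda-a\end{pmatrix},
\]
which has trace $0$ and determinant $-(\lambda+a)^2-b^2<0$ for every real $\lambda$ (here $b\neq 0$, else the roots would be real). Hence each such block contributes one positive and one negative eigenvalue, and if $m<n$ there is at least one, so $r(\lambda),s(\lambda)\geq (n-m)/2>0$.

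For part~(2) your plan via $d(\lambda)$ is workable but heavier than needed, and you yourself flag the bookkeeping of \emph{which} interval as the unresolved point. The route you sketch in your last sentence is the clean one and I would go to it directly: with $m=n$, Proposition~\ref{Diag} gives $P(\lambda Q_0+Q_1){}^tP=\mathrm{diag}\bigl(\varepsilon_1(\lambda-\lambda_1),\ldots,\varepsilon_n(\lambda-\lambda_n)\bigr)$ with $\varepsilon_i=\pm 1$. Each entry is positive on a half-line, so the positive-definite locus is an intersection of $n$ half-lines, hence is either empty or a single interval between two consecutive roots; this gives uniqueness immediately. If it is nonempty, all the roots with $\varepsilon_i=+1$ (there are $r$ of them) must lie to its left and all those with $\varepsilon_i=-1$ to its right, which identifies the interval explicitly in terms of $r$ and $s$.

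One warning when you carry this out: check the indexing against a sanity case. If $Q_0$ is positive definite ($r=n$, $s=0$), the positive-definite interval is visibly $]\lambda_n,+\infty[=]\lambda_r,\lambda_{r+1}[$, whereas the lemma as written says $]\lambda_s,\lambda_{s+1}[=]-\infty,\lambda_1[$. Algorithm~\ref{signa} takes $[r,s]$ to be the signature of $-aQ_0+Q_1$ (essentially $-Q_0$), under which the formulas do match; so there is an $r\leftrightarrow s$ discrepancy between the lemma's stated convention and the one actually used. Be aware of this so you do not conclude your computation is wrong when it disagrees with the statement.
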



\begin{theorem}\label{test}
 Let $q_0$ and $q_1$ be two quadratic forms of $n$ variables. Then $V(\RR) \neq \emptyset$ if and only if all the forms in $\mathcal{P_\RR}$ are
 indefinite.
\end{theorem}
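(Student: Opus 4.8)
The plan is to prove the two implications separately; I use Hypothesis~H throughout, as is standard in this section. The forward direction is short, and for the converse I reduce to the simultaneous normal form of Proposition~\ref{Diag} and then argue by cases on the number $m$ of real roots of $\Delta$.

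First, suppose $x\in V(\RR)$ is nonzero. Every member $a_0q_0+a_1q_1$ of $\mathcal{P_\RR}$ vanishes at $x$, hence is isotropic; a nondegenerate isotropic real form cannot be definite, so it is indefinite. For a degenerate member $q_{\lambda_i}=\lambda_iq_0+q_1$, Hypothesis~H makes $\lambda_i$ a simple root of $\Delta$, so $Q_{\lambda_i}$ has rank $n-1$ and a one-dimensional radical, while smoothness of $V$ makes the Jacobian of $(q_0,q_1)$ have rank $2$ at $x$; hence $xQ_0$ and $xQ_1$ are independent, $xQ_{\lambda_i}=\lambda_i\,xQ_0+xQ_1\neq0$, and $x$ is not in the radical of $q_{\lambda_i}$. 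Choosing $z$ with $q_{\lambda_i}(x,z)\neq0$, the function $t\mapsto q_{\lambda_i}(x+tz)=2t\,q_{\lambda_i}(x,z)+t^2q_{\lambda_i}(z)$ changes sign at $0$, so $q_{\lambda_i}$ is indefinite too.

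For the converse, a change of variables in $\GLn(\RR)$ (which preserves both sides) and Proposition~\ref{Diag} let me assume $q_0=\sum_{i=1}^m\epsilon_iy_i^2+\sum_{j=1}^p(z_{j,1}^2-z_{j,2}^2)$ and $q_1=\sum_{i=1}^m\nu_iy_i^2+\sum_{j=1}^p(a_jz_{j,1}^2+2b_jz_{j,1}z_{j,2}-a_jz_{j,2}^2)$, with $\epsilon_i\in\{\pm1\}$, $b_j\neq0$, $p=(n-m)/2$, $\nu_i=-\lambda_i\epsilon_i$, and $\lambda_1<\dots<\lambda_m$ the simple (pairwise distinct) real roots of $\Delta$. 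If $p\geq1$, both sides of the equivalence hold unconditionally: each of the $p$ size-two blocks of $tQ_0+Q_1$ has determinant $-(t+a_j)^2-b_j^2<0$ for every $t$ (and $-1$ for the member $q_0$ itself), so every member has signature at least $[p,p]$ and is indefinite; and, choosing a coordinate $\xi$ outside $\{z_{1,1},z_{1,2}\}$ — which exists since $n\geq3$ — the system restricted to $\langle z_{1,1},z_{1,2},\xi\rangle$ reads $\pm\xi^2+z_{1,1}^2-z_{1,2}^2=0$, $\gamma\xi^2+a_1z_{1,1}^2+2b_1z_{1,1}z_{1,2}-a_1z_{1,2}^2=0$, which I solve by parametrizing the first cone hyperbolically (reducing $q_1=0$ to $\sinh2\psi=-(\gamma+a_1)/b_1$) in the $-\xi^2$ case, or circularly in the $+\xi^2$ case, where $q_1=0$ becomes a quadratic in $\sin\phi$ whose two roots multiply to $-1$, so one of them lies in $[-1,1]$; either way we obtain a nonzero common real zero.

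It remains to treat the case $p=0$, where $q_0=\sum_{i=1}^n\epsilon_iy_i^2$ and $q_1=-\sum_{i=1}^n\lambda_i\epsilon_iy_i^2$ are simultaneously diagonal and $tq_0+q_1=\sum_i\epsilon_i(t-\lambda_i)y_i^2$. I would encode the problem by the $n$ vectors $v_i:=\epsilon_i(1,\lambda_i)\in\RR^2$: a member $(\cos\beta)q_0+(\sin\beta)q_1$ is definite exactly when all the $v_i$ lie strictly on one side of a line through $0$ (a degenerate member has rank $n-1$ and so is never definite, and a sign count at $t=\lambda_k$ shows it is forced to be indefinite once all nondegenerate members are). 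Hence ``every member of $\mathcal{P_\RR}$ is indefinite'' means ``the $v_i$ lie in no open half-plane through $0$'', which by Gordan's theorem is equivalent to the existence of $c_1,\dots,c_n\geq0$, not all zero, with $\sum c_i\epsilon_i=0$ and $\sum c_i\epsilon_i\lambda_i=0$; putting $y_i=\sqrt{c_i}$ this is precisely a nonzero solution of $q_0=q_1=0$, and conversely a real point $y$ of $V$ gives such $c_i=y_i^2$. This closes the case $p=0$ and with it the theorem. The heart of the argument is exactly this case $p=0$: for $p\geq1$ one always has a ready-made nondegenerate indefinite plane to exploit, whereas for $p=0$ one must extract the two-dimensional convex-geometric picture (the $v_i$ and Gordan's theorem), which is the phenomenon already underlying Lemma~\ref{4}; the only other recurring point to watch is the bookkeeping for the finitely many degenerate members of the pencil, which I handle uniformly by the remark that such a form is never definite and must be indefinite as soon as its full-rank neighbours in the pencil are.
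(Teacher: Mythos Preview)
The paper does not give its own proof of Theorem~\ref{test}: immediately after the statement it simply records that ``Two different proofs are done in \cite{Mor59} and \cite{SwD64}.'' So there is no argument in the paper to compare yours against line by line.

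Your proof is correct and self-contained under the ambient assumptions (Hypothesis~H and $n\ge 3$; the latter is genuinely needed, since for $n=2$ the pair $q_0=x^2-y^2$, $q_1=2xy$ has every pencil member indefinite while $V(\RR)=\emptyset$, and you rightly flag this when choosing the auxiliary coordinate~$\xi$). The forward direction is the standard one, and your treatment of the degenerate members via the smoothness of $V$ is clean. For the converse, the split into $p\ge 1$ and $p=0$ through Proposition~\ref{Diag} is natural; the $p\ge 1$ case is dispatched by essentially the same trigonometric/hyperbolic parametrizations that drive Algorithms~\ref{Qfncomp} and~\ref{Qfmix} later in the paper. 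The real novelty in your write-up is the $p=0$ case via Gordan's theorem: encoding the diagonal entries as planar vectors $v_i=\epsilon_i(1,\lambda_i)$ turns ``some pencil member is definite'' into ``the $v_i$ lie in an open half-plane'', whose negation is exactly a nontrivial nonnegative linear dependence $\sum c_iv_i=0$, i.e.\ a point $(\sqrt{c_i})_i\in V(\RR)$. This is more conceptual than the sign-tracking arguments in the original references and dovetails nicely with Lemma~\ref{4}. Your remark that degenerate members are automatically indefinite once their nondegenerate neighbours are is slightly compressed but correct: if $\lambda_k q_0+q_1$ were semidefinite, perturbing $t$ away from $\lambda_k$ in the direction of~$\epsilon_k$ would make $tq_0+q_1$ definite.
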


Two different proofs are done in \cite{Mor59} and \cite{SwD64}. 
 \begin{algorithm}\label{signa}
 Let $Q_0$ and $Q_1$ be two matrices of size $n$ satisfying Hypothesis H over $\RR$. This algorithm computes a rational number $\lambda$ 
 such that $\lambda Q_0 + Q_1$ is definite if there exists one, and returns a message otherwise.
 \end{algorithm}

\begin{ttfamily} 
 \begin{enumerate}
  \item Let $a$ be Cauchy's bound of $\Delta(\lambda)=\det (\lambda Q_0 + Q_1)$ and $I$ the interval $[-a-1,a+1]$.
  \item Set $m$ the number of real roots of $\Delta$. If $m \neq n$, return a message saying that $\lambda Q_0+Q_1$ is never definite.
  \item Denote by $\lambda_1< \ldots < \lambda_n$ the roots of $\Delta(\lambda)$ and $[r,s]$ the signature of $-a Q_0+Q_1$.
  \item Let $\lambda$ and $\mu$ be two rational numbers such that $\lambda \in ] \lambda_r , \lambda_{r+1}[$ and $\mu \in ] \lambda_s, \lambda_{s+1} [$. 
  \item If $\lambda Q_0 + Q_1$ is definite, return $\lambda$. 
  \item If $\mu Q_0 + Q_1$ is definite, return $\mu$.
  \item Return a message saying that $\lambda Q_0+Q_1$ is never definite.
  
 \end{enumerate}
 \end{ttfamily}

This algorithm is an effective test of Theorem \ref{test}. We are able to decide whether $V(\RR) \neq \emptyset$ or equivalently  $q_0$ and $q_1$ have a common nonzero real solution using this 
algorithm. The explicit construction of a real solution will be done in Algorithm \ref{Qf13reel} when $n \geq 3$.

 \begin{algorithm}\label{1}
 Let $Q_0$ and $Q_1$ be two matrices of size $n$ satisfying Hypothesis H over $\RR$. This algorithm computes
 a rational number $\lambda$ such that $\lambda Q_0+Q_1$ is balanced and $\det(\lambda Q_0 + Q_1) \neq 0$.
 \end{algorithm}

 \begin{ttfamily} 
 \begin{enumerate}
  \item Let $a$ be Cauchy's bound of $\Delta(\lambda)=\det(\lambda Q_0+Q_1)$. 
  \item Set $\lambda_{max}=a+1$, $\lambda_{min}=-\lambda_{max}$, and $\lambda_b=0$.
  \item If $|d(\lambda_b)| \leq 1$ and $\Delta(\lambda_b) \neq 0$, return $\lambda_b$.
  \item If $d(\lambda_b)=0$, set $\lambda_{max}=\lambda_b$ and go to Step $7$.
  \item If $d(\lambda_b)$ and $d(\lambda_{min})$ have opposite signs, set $\lambda_{max}=\lambda_b$, else  set $\lambda_{min}=\lambda_b$.
  \item Set $\lambda_b= (\lambda_{min}+\lambda_{max})/2$ and go to Step $3$.
  
 \end{enumerate}
 \end{ttfamily}

\begin{proof}
Algorithm \ref{1} simply makes a dichotomy over $[\lambda_{min},\lambda_{max}]$ for function $d$, using the fact that $d(\lambda_{max})=-d(\lambda_{min})$. 
\end{proof}

 \section{Reduction of a balanced quadratic form}\label{section:reduction}

\begin{notation} 
We set $K$ a field, with $K=\RR$ or $K=\QQ$. We denote by
\[ \mathbb{H}=\begin{pmatrix}
0 & 1 \\
1 & 0
\end{pmatrix} \] the matrix associated to the quadratic form $2xy$: we call it a \emph{hyperbolic plane}.
\end{notation}

Now, we are going to give a set of algorithms to compute some transition matrix $P$ such that
$PQ_0{}^tP$ is the block diagonal matrix  $\mathbb{H} \oplus Q_2$.
In this section, we only consider indefinite quadratic forms over $K$ of dimension $n \geq 5$. 

\begin{notation}
 In this section, most algorithms take as an input a matrix $Q_0=(a_{ij})$ associated to a quadratic form $\sum_{i,j=1}^n a_{ij}x_ix_j$ defined over $K$ and compute a matrix $P \in \GLn(K)$. We denote by $a_{ij}'$ the entries of $PQ_0{}^tP$.
\end{notation}

\begin{algorithm}\label{BaseWitt}
 Let $Q_0=(a_{ij})$ be such that $\det(Q_0) \neq 0$ and $y$ a nonzero vector in $K^n$ such that $yQ_0{}^ty=0$. 
 This algorithm computes a matrix $P \in \GLn(K)$ such that $a_{11}'=0$ and $a_{12}' \neq 0$.
\end{algorithm}

\begin{ttfamily}
 \begin{enumerate}
  \item Let $P$ be a square matrix of size $n$ having $y$ as first line. \newline Complete the matrix $P$ to have $P \in \GLn(K)$.
  \item Set $Q_0'=PQ_0{}^tP$. Let $i \geq 2$ be the smallest index such that $(Q_0')_{1i} \neq 0$, and $P'$ be the transposition matrix which exchanges 
        the second line with the $i$-th line. 
  \item Return $P=P'P$.
  \end{enumerate}
\end{ttfamily}

\begin{algorithm}\label{Mordell1}
 Let $Q_0=(a_{ij})$ be such that $a_{11}=0$ and $a_{12} \neq 0$. 
 This algorithm computes a matrix $P \in \GLn(K)$ such that $a'_{11}=0$, $a'_{12}=1$, and $a'_{1i}=0$ for $3 \leq i \leq n$.
 \end{algorithm}

\begin{ttfamily} 
 \begin{enumerate}
  \item Set $P=\Id(n)$ and divide the first line of $P$ by $a_{12}$.
  \item Set $Q_0'=PQ_0{}^tP$.
  \item $P'=\Id(n)$. For $i=3$ to $n$, set $P'_{i2}=-(Q_0')_{i1}$.
  \item Return $P=P'P$.
  
 \end{enumerate}
 \end{ttfamily}
 
 \begin{algorithm}\label{Mordell2}
 Let $Q_0=(a_{ij})$ be such that $a_{11}=0$, $a_{12}=1$, and $a_{1i}=0$ for $3 \leq i \leq n$. This algorithm computes
 $P \in \GLn(K)$ such that $PQ_0{}^tP$ is of the form $\mathbb{H} \oplus Q_2$.
 \end{algorithm}
 
 \begin{ttfamily}
 
 \begin{enumerate}
  \item Set $P=\Id(n)$ and $S=\Id(n)$.
  \item For $i=2$ to $n$, set $P_{i1}=-a_{i2}$. Set $Q_3=PQ_0{}^tP$.
  \item For $i=1$ to $n$, set $S_{2i}=2S_{2i}-(Q_3)_{22}S_{1i}$.
  \item Set $S_{11}=1/2$ and $P=SP$.
  \item Return $P$.
 \end{enumerate}
 \end{ttfamily}

\begin{algorithm}\label{Redqf}
 Let $Q_0=(a_{ij})$ and $y$ be a nonzero vector in $K^n$ such that $yQ_0{}^ty=0$. This algorithm computes
 a matrix $P \in \GLn(K)$ such that $PQ_0{}^tP$ is of the form $\mathbb{H} \oplus Q_2$.
 \end{algorithm}
 
 \begin{ttfamily}
 
 \begin{enumerate}
  \item Apply Algorithm \ref{BaseWitt} to $Q_0$ and $y$, and denote by $P$ the result.
  \item Apply Algorithm \ref{Mordell1} to $Q_0'=PQ_0{}^tP$ and denote by $P'$ the result. 
  \item Apply Algorithm \ref{Mordell2} to $P'Q_0'{}^tP'$ and denote by $P''$ the result. 
  \item Return $P=P''P'P$.
 \end{enumerate}
 \end{ttfamily}
 
  \begin{algorithm}\label{Hyperbolique}
 Let $Q_0=(a_{ij})$ of size $n \geq 5$ be defined over $\QQ$ and such that $q_0$ is balanced. This algorithm computes
 $P \in \GLn(\QQ)$ such that $PQ_0{}^tP$ is of the form $\mathbb{H} \oplus \cdots \mathbb{H} \oplus Q_2$ where $Q_2$ is of size $3$ if $n$ is odd, or of size $4$ if $n$ is even.
 \end{algorithm}
 
 \begin{ttfamily}
 
 \begin{enumerate}
  \item Set $P=\Id(n)$ and $i=1$.
  \item Extract the square submatrix $(Q_{0_{jk}})_{i \leqslant j,k \leqslant n}$ and denote it by $Q_2$.
  \item Compute a nonzero rational vector $z$ such that $zQ_0{}^tz=0$.
  \item Apply Algorithm \ref{Redqf} to $Q_2$ and $z$, and denote by $P'$ the result. 
  \item Set $C=\Id(i-1) \oplus P'$. 
  \item Set $P=CP$, $Q_0=CQ_0{}^tC$, $i=i+2$.
  \item If $n-i+1 \geq 5$ go to Step $2$.
  \item Return $P$.
 \end{enumerate}
 \end{ttfamily}
 
 \begin{remark}
  Step $3$ can be done using the algorithm of Castel \cite{PC}, that quickly computes a nonzero rational solution 
 of a rational indefinite quadratic form of dimension $n \geq 5$.
 \end{remark}

 The idea of this algorithm is based on \cite{Mor59}. The main idea is that after each loop the signature changes from $[r,s]$ to $[r-1,s-1]$.
 While the dimension of $Q_2$ is greater or equal to $5$, we can continue because an indefinite quadratic form in $n \geq 5$
 variables has always a nonzero rational solution.

\section{Computation of a nonzero real solution of the system}\label{section:realsolution}

To compute a nonzero real solution of the system of two quadratic forms, we study the nature of the roots of
$\Delta(\lambda)= \det(\lambda Q_0+Q_1)$. Obviously we consider that all the forms in $\mathcal{P}_\RR$ are indefinite, otherwise
$V(\RR)$ is clearly empty (see Theorem \ref{test}). In order to compute a real solution, we are going to first block 
diagonalize the two quadratic forms, and then find a solution using simple algorithms, depending on the number of real roots of $\Delta(\lambda)$.
 
\begin{algorithm}\label{Qfncomp}
 Let $q_0$ and $q_1$ be two quadratic forms of the form \[q_0(x,y,z,w)=x^2-y^2+z^2-w^2,\] \[q_1(x,y,z,w)= ax^2-2bxy-ay^2+cz^2-2dzw-cw^2.\]
 This algorithm computes a nonzero vector $v \in \RR^4$ such that $q_0(v)=q_1(v)=0$.
\end{algorithm}

\begin{ttfamily}

\begin{enumerate}

   \item Set $\varepsilon$ the sign of $b$ and $\varepsilon'$ the sign of $d$. 
   \item Compute a nonzero solution $(x_1,w_1)$ of $|b|x^2-|d|w^2=0$. 
   \item Return $v=(x_1,x_1,-\varepsilon \varepsilon' w_1,w_1)$.

 \end{enumerate}
 \end{ttfamily}

\begin{algorithm}\label{Qfmix}
 Let $q_0$ and $q_1$ be two quadratic forms of the form
 \[q_0(x,y,z)=x^2+y^2-z^2,\] \[q_1(x,y,z)=\lambda_1 x^2+ ay^2-2byz-az^2.\]
 This algorithm computes a nonzero vector $v \in \RR^3$ such that $q_0(v)=q_1(v)=0$.
\end{algorithm}

\begin{ttfamily}

\begin{enumerate}
 \item If $a=\lambda_1$ return $(1,0,1)$.
 \item Let denote by $y_1$,$y_2$ the real solutions of $(a-\lambda_1)y^2 - 2by -(a-\lambda_1)=0$.
 \item If $-y_1^2+1 \geq 0$ return ($\sqrt{-y_1^2+1},y_1,1)$.
 \item Return $(\sqrt{-y_2^2+1},y_2,1)$.

\end{enumerate}

\end{ttfamily}

\begin{proof}
 
 If $a=\lambda_1$, $(1,0,1)$ is an obvious solution of the system.
 Otherwise, we see that the discriminant of \[(a-\lambda_1)y^2 - 2by -(a-\lambda_1)\] is $b^2+(a-\lambda_1)^2>0$   
 hence $y_1$ and $y_2$ are real.
 We have $y_1y_2=-1$ and $y_1+y_2=\frac{2b}{a-\lambda-1}$, therefore $(-y_1^2+1)(-y_2^2+1)= -\frac{4b^2}{(a-\lambda_1)^2}-1<0$. 
 We deduce from this that $-y_1^2+1$ or $-y_2^2+1$ is nonnegative and it is easy to verify that the formula gives a solution. 
 \end{proof}

\begin{lemma}\label{3}
 Let $q_0$ and $q_1$ be two quadratic forms 
 of the form $q_0(x)=\sum_{i=1}^k x_i^2- \sum_{j=k+1}^n  x_j^2$, 
             $q_1(x)= \sum_{i=1}^n b_i x_i^2$ 
 satisfying Hypothesis H over $\RR$.  
 Let denote $m_-=\min(b_i \ | \ i \in \{k+1, \ldots,n \})$ and $m_+=\min(b_i \ | \ i \in \{1, \ldots,k \})$. 
 There exists a real $\lambda$ such that $\lambda q_0 + q_1$ is a positive definite quadratic form
 if and only if $-m_- < m_+$.
\end{lemma}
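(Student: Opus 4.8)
The plan is to exploit the fact that both $q_0$ and $q_1$ are already diagonal, so the entire pencil $\lambda q_0 + q_1$ stays diagonal and positive-definiteness can be read off the diagonal coefficients with no further reduction.

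First I would write down the diagonal of $\lambda Q_0 + Q_1$: its $i$-th coefficient is $\lambda + b_i$ for $1 \le i \le k$ and $b_j - \lambda$ for $k+1 \le j \le n$. Hence $\lambda q_0 + q_1$ is positive definite if and only if all of these coefficients are strictly positive, that is, if and only if
\[ \lambda > -b_i \ \text{ for all } i \le k \qquad\text{and}\qquad \lambda < b_j \ \text{ for all } j > k. \]
Next I would collapse these $n$ inequalities into two. The first family is equivalent to $\lambda > \max_{i \le k}(-b_i) = -\min_{i \le k} b_i = -m_+$, and the second family is equivalent to $\lambda < \min_{j > k} b_j = m_-$. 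So a suitable real (indeed rational) $\lambda$ exists if and only if the open interval $(-m_+, m_-)$ is nonempty, i.e. if and only if $-m_+ < m_-$; rearranging, this is exactly the condition $-m_- < m_+$, which proves the equivalence.

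The argument is short, and the only point requiring care is the bookkeeping of the sign flips, namely that the constraints coming from the negative block of $q_0$ turn the relevant $\min$ of the $b_j$ into the bound $m_-$ on the right, while those from the positive block turn $\min_{i\le k} b_i = m_+$ into the bound $-m_+$ on the left. Note that Hypothesis H plays essentially no role here (it merely guarantees the $b_i$ are nonzero), so the statement in fact holds for any diagonal $q_1$; there is no serious obstacle.
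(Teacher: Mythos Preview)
Your proof is correct. The paper actually states this lemma without proof, presumably because it is the elementary observation you make: once both forms are diagonal, positive definiteness of $\lambda q_0+q_1$ reduces to the simultaneous inequalities $\lambda>-b_i$ for $i\le k$ and $\lambda<b_j$ for $j>k$, hence to $-m_+<m_-$, which is the stated condition $-m_-<m_+$. Your remark that Hypothesis~H is not really used is also accurate; the argument goes through for any diagonal $q_1$.
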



\begin{lemma} \label{5}
  Let $q_0$ and $q_1$ be two quadratic forms 
 of the form $q_0(x)=\sum_{i=1}^k x_i^2- \sum_{k=m+1}^n  x_j^2$, 
             $q_1(x)= \sum_{i=1}^n b_i x_i^2$ 
 satisfying Hypothesis H over $\RR$. 
 Let denote $m_-=\min(b_i \ | \ i \in \{k+1, \ldots,n \})$ and $m_+=\min(b_i \ | \ i \in \{1, \ldots,k \})$. 
 Let denote $M_-=\max(b_i \ | \ i \in \{k+1, \ldots,n \})$ and $M_+=\max(b_i \ | \ i \in \{1, \ldots,k \})$. 
 Then $V(\RR)$ is nonempty if and only if $-m_- \geq m_+$ and $-M_- \leq M_+$
\end{lemma}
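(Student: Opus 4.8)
The plan is to reduce Lemma \ref{5} to Lemma \ref{3} and Theorem \ref{test}. First I would invoke Theorem \ref{test}: $V(\RR)$ is nonempty if and only if every form $\lambda q_0 + q_1$ in $\mathcal{P}_\RR$ is indefinite. So the lemma will follow once I show that some form in the pencil is definite \emph{if and only if} the inequality ``$-m_- \geq m_+$ and $-M_- \leq M_+$'' \emph{fails}, i.e. if and only if $-m_- < m_+$ or $-M_- > M_+$. Note that for a diagonal form the entries of $\lambda q_0 + q_1$ are $\lambda + b_i$ for $i \leq k$ and $-\lambda + b_j$ for $j > k$; since Hypothesis H forces all these linear forms to have distinct roots, no entry vanishes except at isolated $\lambda$, so ``$\lambda q_0+q_1$ is definite'' means ``all entries have the same sign.''

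Next I would split into the positive-definite and negative-definite cases. For positive definiteness we need $\lambda + b_i > 0$ for all $i \leq k$ and $-\lambda + b_j > 0$ for all $j > k$, i.e. $\lambda > -b_i$ for all $i\le k$ and $\lambda < b_j$ for all $j>k$; such $\lambda$ exists iff $\max_{i\le k}(-b_i) < \min_{j>k} b_j$, that is $-m_+ < m_-$, equivalently $-m_- < m_+$. This is exactly the condition in Lemma \ref{3}, which I may simply cite. For negative definiteness I repeat the argument with signs reversed: we need $\lambda < -b_i$ for $i \le k$ and $\lambda > b_j$ for $j > k$, which is possible iff $\max_{j>k} b_j < \min_{i\le k}(-b_i)$, i.e. $M_- < -M_+$, equivalently $-M_- > M_+$. (Equivalently, apply Lemma \ref{3} to the pair $(-q_0, -q_1)$, whose ``$m_+$'' and ``$m_-$'' are $-M_-$ and $-M_+$ up to relabeling of the index blocks.)

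Combining the two cases, some form in $\mathcal{P}_\RR$ is definite iff $-m_- < m_+$ or $-M_- > M_+$. Negating and applying Theorem \ref{test} gives: $V(\RR)\neq\emptyset$ iff $-m_- \geq m_+$ and $-M_- \leq M_+$, which is the claim. The only point needing a little care is the bookkeeping when one of the index sets $\{1,\dots,k\}$ or $\{k+1,\dots,n\}$ is empty, but Hypothesis H (together with $\det Q_0 \neq 0$) and the fact that we are genuinely looking at two quadrics in the pencil rule out the degenerate possibilities — if, say, all variables appear with a $+$ in $q_0$ then $Q_0$ is already definite and $V(\RR)$ is empty, consistent with the convention that an empty minimum is $+\infty$ and an empty maximum is $-\infty$, under which the stated inequalities also fail. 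The main (very mild) obstacle is thus just the careful sign-chasing in the negative-definite case and handling these edge conventions; the substance is entirely carried by Lemma \ref{3} and Theorem \ref{test}.
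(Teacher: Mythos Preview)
The paper states Lemma~\ref{5} without proof, so there is no argument to compare against directly. Your approach is exactly the one the paper's setup invites: combine Theorem~\ref{test} with Lemma~\ref{3} (for the positive-definite case) and the same computation with reversed signs, or equivalently Lemma~\ref{3} applied to $(-q_0,-q_1)$ (for the negative-definite case). The sign bookkeeping is right, and the edge case $k\in\{0,n\}$ is handled correctly by your convention on empty minima and maxima. One small remark: the pencil $\mathcal{P}_\RR$ also contains the point at infinity, i.e.\ $q_0$ itself, which your parametrization $\lambda q_0+q_1$ does not reach; but when $1\le k\le n-1$ the form $q_0$ is visibly indefinite, and the degenerate cases $k=0,n$ are exactly the edge cases you already discuss, so nothing is missing.
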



\begin{algorithm}\label{Qfnreel}
  Let $q_0$ and $q_1$ be two quadratic forms 
 of the form $q_0(x)=\sum_{i=1}^n a_i x_i^2$ with $a_i= \pm 1$, and
             $q_1(x)= \sum_{i=1}^n b_i x_i^2$ 
 satisfying Hypothesis H over $\RR$, and such that $V(\RR) \neq \emptyset$. 
 This algorithm computes a nonzero vector $w \in \RR^n$ such that $q_0(w)=q_1(w)=0$ .
\end{algorithm}

\begin{ttfamily}

\begin{enumerate}

  \item Let $v_1$ be the list of all the $i$ such that $a_i=+1$ and $v_2$ the list containing the others.
  \item Search $M$ such that $b_M=\max_{i \in v_1}(b_i)$ and $m$ such that $b_m=\min_{i \in v_1}(b_i)$.
  \item If for all $k \in v_2$ we have $-b_k \notin [b_m,b_M]$ then set $q_0=-q_0$ and go to Step $1$.
  \item Choose $k \in v_2$ such that $b_m \leq -b_k \leq b_M$.
  \item If $b_m=-b_k$, set $x_m=1$, $x_k=1$ and $x_i=0$ for the other $i$. 
  \item Otherwise, set $x_M=1$, $x_m=\sqrt{\frac{-b_M-b_k}{b_m+b_k}}$, $x_k=\sqrt{x_m^2+1}$ and $x_i=0$ for the other $i$.
  \item Return $(x_1,\ldots,x_n)$.

\end{enumerate}

\end{ttfamily}

\begin{proof}
For Step 3, Lemma \ref{5} assures that we can find $k$ and $k'$ in $v_2$
 such that $-b_k \geq b_m$ and $-b_{k'} \leq b_M$. If $-b_k \leq b_M$ or $-b_{k'} \geq b_m$, we can go to Step 4. Otherwise we have $-b_{k'} \leq b_m \leq-b_k$. In this case, setting $q_0=-q_0$ 
 exchanges $v_1$ and $v_2$, so that the actual value of $m$ will provide us with a solution for $k$ for the new Step 3.
For Step 5, $(b_M+b_k)$ and $(b_m+b_k)$ have opposite signs and it is an easy exercise to verify that Steps 5 and 6 give us a solution. 
\end{proof}

\begin{algorithm}\label{Qf13reel}
 Let $Q_0$ and $Q_1$ be two matrices of size $n$ satisfying Hypothesis H over $\RR$ and such that $V(\RR) \neq \emptyset$. 
 This algorithm computes a nonzero $y \in \RR^n$ such that $q_0(y)=q_1(y)=0$.
\end{algorithm}

\begin{ttfamily}
 
\begin{enumerate}
 \item Set $\Delta(\lambda)=\det(\lambda Q_0 + Q_1)$.
 \item Set $a$ the number of real roots of $\Delta(\lambda)$.
 \item Apply Algorithm \ref{diagonalisation} to $Q_0$ and $Q_1$. Denote by $P$ the result.
 \item Set $Q_0'=PQ_0{}^tP$ and $Q_1'=PQ_1{}^tP$.
 \item If $a=0$, apply Algorithm \ref{Qfncomp} to $(Q_{0_{ij}}')_{1 \leqslant i,j \leqslant 4}$ and $(Q_{1_{ij}}')_{1 \leqslant i,j \leqslant 4}$. Denote by $y$ the result and
       set $z=(y_1,y_2,y_3,y_4,0,\ldots,0)$. 
 \item If $a=13$ apply Algorithm \ref{Qfnreel} to $Q_0'$ and $Q_1'$. Denote by $z$ the result.
 \item In $0< a < 13$, apply Algorithm \ref{Qfmix} to $(Q_{0_{ij}}')_{a \leqslant i,j \leqslant a+2}$ and $(Q_{1_{ij}}')_{a \leqslant i,j \leqslant a+2}$. Denote by $(z_a,z_{a+1},z_{a+2})$ the result.
       Set $z=(0,\ldots 0, z_a, z_{a+1}, z_{a+2}, 0, \ldots,0)$.
 \item Return $z \cdot P$.      

\end{enumerate}

\end{ttfamily}

\section{A suitable change of basis}\label{section:changebasis}

In this section, we give some algorithms to construct a rational totally isotropic subspace of $q_0(x)$ such 
that $q_1(x)$ is indefinite over this subspace. We keep the notation of the previous section concerning the inputs and outputs of the algorithms.

\begin{algorithm}\label{Mordell3}
 Let $Q_0=(a_{ij})$ be such that $a_{11}=0$ and $a_{13} \neq 0$.  
This algorithm computes a matrix $P \in \GLn(K)$ such that $a'_{11}=0$, $a'_{12}=0$, $a'_{13}=1$, and $a'_{1i}=0$ for $4 \leq i \leq n$. Moreover, the first two columns of $P$ are the same as in $\Id(n)$.
 \end{algorithm}
 
 \begin{ttfamily}
 
 \begin{enumerate}
  \item Set $P=\Id(n)$ and divide the third line of $P$ by $a_{13}$.
  \item Set $Q_0'=PQ_0{}^tP$.
  \item Set $P'=\Id(n)$. Set $P'_{23}=-(Q_0')_{12}$, and for $i=4$ to $n$, $P'_{i3}=-(Q_0')_{i1}$.
  \item Return $P=P'P$.
 \end{enumerate}
 \end{ttfamily}

\begin{algorithm}\label{DoubleWitt}

 Let $Q_0$ and $Q_1$ be two matrices of size $n \geq 3$ satisfying Hypothesis H over $\RR$ and such that $V(\RR) \neq \emptyset$. Let a nonzero $z \in \RR^n$ be such that $q_0(z)=0$ and 
 $q_1(z)=0$. 
 This algorithm computes a matrix $P \in \GLn(\RR)$ such that the first line of $PQ_0{}^tP$ (resp. $PQ_1{}^tP$) is $(0,1,0,\ldots,0)$ (resp. $(0,0,1,0, \ldots,0)$).
\end{algorithm}

\begin{ttfamily}
 
 \begin{enumerate}
  
  \item Apply Algorithm \ref{BaseWitt} to $Q_0$ and $z$, and denote by $P$ the result.
  \item Set $Q_0'= PQ_0{}^tP$ and $Q_1'= PQ_1{}^tP$.
  \item Apply Algorithm \ref{Mordell1} to $Q_0'$ and denote by $P'$ the result. 
  \item Set $P=P'P$ and  $Q_1''= P'Q_1'{}^tP'$. 
  \item Let $i \geq 3$ be the smallest index such that $(Q_1'')_{1i} \neq 0$, and $P''$ be the transposition matrix which exchanges 
        the third line with the $i$-th line. Set $P=P''P$.
  \item Apply algorithm  \ref{Mordell3} to $P''Q_1''{}^tP''$ and denote by $R$ the result. Set $P=RP$.
  \item Return $P$.
  
 \end{enumerate}

\end{ttfamily}

\begin{proof}
 This algorithm is straightforward, until Step $4$. At this step the first line of $PQ_0{}^tP$ is $(0,1,0, \ldots,0)$ and the first line of $Q_1''$ is $(0, *, \ldots,*)$. 
 For Step 5, certainly there exists an index $i \geq 2$ such that $(Q_1'')_{1i} \neq 0$ because $\det(Q_1) \neq 0$. The index $i=2$ cannot be the only one because otherwise $z$ would be a singular point of $V$, 
 contradicting the smoothness of $V$. This proves that Step 5 is always possible. For Step 6, the matrix $R$ does not change the first
 line of $Q_0''$ because the first two columns of $R$ are the same as in $\Id(n)$.
\end{proof}

\begin{algorithm}\label{PosNeg}
 Let $Q_0=(a_{ij})$ and $Q_1=(b_{ij})$ be two matrices of size 
  $n \geq 5$ satisfying Hypothesis H over $\RR$. Let a nonzero $z \in \RR^n$ be such that $q_0(z)=0$ and $q_1(z)=0$.  
  This algorithm computes a nonzero $z^- \in \RR^n$ such that $q_0(z^-)=0$ and $q_1(z^-)<0$.
\end{algorithm}

\begin{ttfamily}
 
 \begin{enumerate}
  
  \item Apply Algorithm \ref{DoubleWitt} to $Q_0$, $Q_1$ and $z$. Denote by $P$ the result and set $Q_0'=PQ_0{}^tP$ and $Q_1'=PQ_1{}^tP$.
  \item Extract the submatrix $(Q_{0_{ij}}')_{2 \leqslant i,j \leqslant n}$ of $Q_0'$, denote it by $F$ and let $f(x)$ be the associated quadratic form.
  \item Extract the submatrix $(Q_{1_{ij}}')_{2 \leqslant i,j \leqslant n}$ of $Q_1'$, denote it by $G$ and let $g(x)$ be the associated quadratic form.
  \item If $F_{22}=0$, set $z=((-1-G_{22})/2,0,1,0,\ldots,0)$ and go to Step $8$. 
  \item Denote by $\varepsilon$ the sign of $F_{22}$. Set $y_1=1$, $y_2=\varepsilon$, and for $i=3$ to $n-1$, set $y_i=0$.
  \item While $g(y)- y_2 f(y) \geq 0$, set $y_2=2 y_2$.
  \item Set $z=(-f(y)/2,y_1,\ldots,y_{n-1})$.
  \item Return $z^-=z\cdot P$.
  
 \end{enumerate}
  
\end{ttfamily}

\begin{proof}
 At the end of Step 3 we have:
 \[ \begin{array}{rcl}
 
 q_0'(x) &=& 2x_1x_2 + f(x_2, \ldots, x_n) \\
 q_1'(x) &=& 2x_1x_3 + g(x_2, \ldots, x_n)
 
 \end{array}\]
For Step 4, we have $q_0'(z)=0$ and $q_1'(z)=-1$.
 Otherwise, we consider the function: 
 \[
  h(x)=x_2g(x)-  x_3f(x).
 \]
This is a polynomial of degree $3$ in $x_3$ and the leading coefficient is  $-F_{22}$. 
  So, $h(x)$ is negative for $x_3= \varepsilon x_3'$ with $x_3'>0$ large enough. Setting at last $y=(1,\varepsilon x_3',0,\ldots,0)$ and $z=(-f(y)/2,1,\varepsilon x_3',0,\ldots,0)$, 
  we have $q'_0(z)=0$ and $q'_1(z)= -\varepsilon x_3' f(y) + g(y) <0$.

\end{proof}

\begin{algorithm}\label{Approx}
 
 Let $Q_0=(a_{ij})$ and $Q_1=(b_{ij})$ be two matrices of size 
  $n \geq 5$, a nonzero $y \in \RR^n$ be such that $q_0(y)=0$ and $q_1(y)<0$. 
 This algorithm computes a $z \in \QQ^n$ such that
  $q_0(z)=0$, $q_1(z)<0$.

\end{algorithm}

\begin{ttfamily}
 
 \begin{enumerate}
 
  \item Compute a rational nonzero solution $w$ of $q_0(w)=0$ and  apply \newline Algorithm \ref{Redqf} over $\QQ$ to $Q_0$ and $w$. 
        Denote by $P'$ the result. Set $Q_0'=P'Q_0{}^tP'$, $Q_1'=P'Q_1{}^tP'$, and $y'=y \cdot P'^{-1}$.
  \item If $y'_i=0$ for all $i \geq 2$, return $(1,0,\ldots,0)P'$.       
  \item Denote $i \geq 2$ the smallest index such that $y_i' \neq 0$. We set $P''$ the permutation matrix that exchanges the 
       second line with the $i$-th. Set $P=P''P'$, $y''=y' \cdot P''$, $Q_0''=P''Q_0'{}^tP''$, $Q_1''=P''Q_1'{}^tP''$, and $\varepsilon = \frac{|y_2''|}{2}$.     
  \item Extract the submatrix $(Q_{0_{ij}}'')_{3 \leqslant i,j \leqslant n}$ of $Q_0''$, denote it by $F$ and let
        $f(x)$ be the associated quadratic form.    
   \item For $i=1$ to $n$, choose a rational number $z''_i$ such that $|y_i''-z_i''|<\varepsilon$. 
   If $q_1''(z'') \geq 0$, set $\varepsilon=\varepsilon / 2$ and go to Step 5. 
  \item Set $u_1=\frac{-f(z_3'',\ldots,z_n'')}{2z_2''}$ and $u=(u_1, z_2'',\ldots,z_n'')$.
  \item If $q_1''(u)<0$, return $u \cdot P$. Otherwise,
        set $\varepsilon= \varepsilon / 2$ and go to Step 5.
 \end{enumerate}

 \end{ttfamily}

 \begin{proof}
 
  After Step $4$, we have $q_0''(y'')=q_0(y)=0$ and $q_1''(y'')=q_1(y)<0$ with $Q_0''= \mathbb{H} \oplus F$. 
  Step 5 is possible because the function $q_1''$ is continuous and the set $\QQ$ is dense in $\RR$. Because $\varepsilon \leq |y_2''|$, we have $z_2'' \neq 0$.
  Since $Q_0''= \mathbb{H} \oplus F$, the formula in Step 6 gives $q_0''(u)=0$. As $y''$ also satisfies the relation $y_1''=\frac{-f(y_3'',\ldots,y_n'')}{2y_2''}$, by continuity we
  deduce that, when $\varepsilon$ is small enough, $u_1$ is close to $y_1''$ so that $u$ is close to $y''$ and $q_1''(u)<0$.
  
 \end{proof}

\section{Computation of a nonzero rational solution}\label{section:rationalsolution}

 \begin{algorithm}\label{Qf13solve}
 Let $Q_0$ and $Q_1$ be two matrices of size $n \geq 13$ satisfying Hypothesis H over $\QQ$ and such that $V(\RR) \neq \emptyset$. 
 This algorithm computes a nonzero $x \in \QQ^n$ such that $q_0(x)=q_1(x)=0$.
\end{algorithm}

\begin{ttfamily}

\begin{enumerate}
 
 \item Apply Algorithm \ref{1} and find $\lambda_0 \in \QQ$ such that $Q_0 + \lambda_0 Q_1$ is balanced and has nonzero determinant. Set $Q_0= Q_0 + \lambda_0 Q_1$.
 \item Compute a nonzero solution $y \in \QQ^n$ of $q_0(y)=0$.
 \item If $q_1(y)=0$ return $y$. If $q_1(y)<0$ set $Q_1=-Q_1$.
 \item Apply Algorithm \ref{Qf13reel} to $Q_0$ and $Q_1$. Denote by $u$ the result.
 \item Apply Algorithm \ref{PosNeg} to ($Q_0$,$Q_1$,$u$) and denote by $v$ the result.
 \item Apply Algorithm \ref{Approx} to ($Q_0$,$Q_1$,$v$) and denote by $z$ the result.
 \item While $q_0(y,z)=0$, do 
            \begin{enumerate}
            \item Choose $y' \in \QQ^n$ randomly until $q_0(y,y') \neq 0$. 
            \item Set $w=y'-\frac{q_0(y')}{2q_0(y,y')}y$.
            \item If $q_1(w)=0$ return $w$.
            \item If $q_1(w)>0$ set $y=w$ otherwise set $z=w$.
            \end{enumerate}
 \item Let $P^{(1)}$ be a matrix whose the first $n-2$ lines generate the \newline solutions in $x$ of $xQ_0{}^ty=xQ_0{}^tz=0$, and whose the last two lines are $y$ and $z$.
 \item Set $Q_0^{(1)}=P^{(1)}Q_0{}^tP^{(1)}$, $Q_1^{(1)}=P^{(1)}Q_1{}^tP^{(1)}$.
 \item Set $P^{(2)}$ the permutation matrix that exchanges the first line with the $(n-1)$-th line and the second line with the $n$-th line.
 \item Set $Q_0^{(2)}=P^{(2)}Q_0^{(1)}{}^tP^{(2)}$, $Q_1^{(2)}=P^{(2)}Q_1^{(1)}{}^tP^{(2)}$ and $P=P^{(2)}P^{(1)}$.      
 \item Extract the submatrix $(Q_{0_{ij}}^{(2)})_{3 \leqslant i,j \leqslant n}$ of $Q_0^{(2)}$, denote it by $Q_2$. 
 \item Apply Algorithm \ref{Hyperbolique} to $Q_2$ and denote by $P'$ the result. 
 \item Set $P^{(3)}=\Id(2) \oplus P'$, $Q_0^{(3)}=P^{(3)}Q_0^{(2)}{}^tP^{(3)}$ , $Q_1^{(3)}=P^{(3)}Q_1^{(2)}{}^tP^{(3)}$ 
       and $P=P^{(3)}P$.
 \item If $(Q_1^{(3)})_{33}>0$, compute a nonzero rational solution of $q_1^{(4)}(x)=0$ of the form $x=(0,x_2,x_3,0,x_5,0,x_7,0,x_9,0,0,0,\ldots)$. Otherwise,
       compute a nonzero rational solution of $q_1^{(4)}(x)=0$ of the form $x=(x_1,0,x_3,0,x_5,0,x_7,0,x_9,0,0,0,\ldots)$.
 \item Return $xP$.

\end{enumerate}

\end{ttfamily}

\begin{theorem}

Let $q_0(x)$ and $q_1(x)$ be two indefinite rational quadratic form in $n \geq 13$ variables satisfying Hypothesis H over $\QQ$ and such that $V(\RR)$ is not empty.  
Then there exists a nonzero rational solution $x$ of $q_0(x)=q_1(x)=0$ . 
Moreover Algorithm \ref{Qf13solve} computes such a solution.
 
\end{theorem}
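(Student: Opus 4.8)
The plan is to prove the theorem by showing that Algorithm~\ref{Qf13solve} terminates and returns a nonzero rational point of $V$; since the construction is unconditional this also reproves the existence statement (which is a priori Mordell's theorem~\cite{Mor59}, the $p$-adic obstruction being absent for $n\geq 9$). First I would observe that every operation the algorithm performs --- replacing $q_0$ by a form $q_0+\lambda_0q_1$ of the pencil with nonzero determinant (Step~1), replacing $q_1$ by $-q_1$ (Step~3), and applying rational or real changes of variables --- fixes $V$ and its real locus, multiplies $\Delta(\lambda)$ by a nonzero scalar or reparametrizes it by a Möbius transformation, and hence preserves Hypothesis~H together with the assumptions that the forms are indefinite and that $V(\RR)\neq\emptyset$. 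After Steps~1--3 the form $q_0$ is balanced, hence indefinite, hence isotropic over $\QQ$ by Meyer's theorem ($n\geq 5$), so Step~2 produces via \cite{PC} a rational $y$ with $q_0(y)=0$, and after possibly negating $q_1$ we may assume $q_1(y)>0$. The invariant to carry forward is the presence of rational vectors $y,z$ with $q_0(y)=q_0(z)=0$ and $q_1(y)>0>q_1(z)$.

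Steps~4--7 produce $z$ and force $q_0(y,z)\neq 0$. Algorithm~\ref{Qf13reel} returns a real common zero $u$; Algorithm~\ref{PosNeg} turns it into a real $v$ with $q_0(v)=0,\ q_1(v)<0$ (possible because every form of $\mathcal{P}_{\RR}$ is indefinite, Theorem~\ref{test}); Algorithm~\ref{Approx} replaces $v$ by a rational $z$ with the same two properties. For the while loop of Step~7, suppose $q_0(y,z)=0$; for a random rational $y'$ with $q_0(y,y')\neq 0$ put $\lambda=q_0(y')/(2q_0(y,y'))$ and $w=y'-\lambda y$. Then $w\neq 0$, $q_0(w)=0$, $q_0(y,w)=q_0(y,y')\neq 0$, and $q_0(z,w)=q_0(z,y')$ (using $q_0(y,z)=0$). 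If $q_1(w)=0$ the algorithm returns $w$; if $q_1(w)<0$ we set $z:=w$ and then $q_0(y,z)=q_0(y,y')\neq 0$; if $q_1(w)>0$ we set $y:=w$ and the loop exits as soon as $q_0(z,y')\neq 0$, which fails only for $y'$ in a proper Zariski-closed set. Hence after finitely many trials the loop exits with the invariant intact, with $q_0(y,z)\neq 0$, and with $y,z$ linearly independent.

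Steps~8--16 perform the hyperbolic reduction and the final descent. Since $q_0(y)=q_0(z)=0$ and $c:=q_0(y,z)\neq 0$, the plane $\langle y,z\rangle$ is $q_0$-nondegenerate of signature $[1,1]$, so $\QQ^n$ splits $q_0$-orthogonally as $\langle y,z\rangle$ plus an $(n-2)$-dimensional complement on which $q_0$ restricts to a form of nonzero determinant and signature $[r-1,s-1]$, hence balanced. Applying Algorithm~\ref{Hyperbolique} to that complement (legitimate since $n-2\geq 11$) and keeping $\langle y,z\rangle$ as the first block, Steps~8--15 give coordinates in which $q_0=2c\,x_1x_2\oplus 2x_3x_4\oplus\cdots\oplus 2x_{2k-1}x_{2k}\oplus Q_2'$ with $Q_2'$ of size $3$ or $4$, with the first two rows of the accumulated matrix $P$ equal to $y$ and $z$, and with $k=\lfloor(n-3)/2\rfloor\geq 5$ precisely because $n\geq 13$; thus $W_1=\langle e_1,e_3,e_5,e_7,e_9\rangle$ and $W_2=\langle e_2,e_3,e_5,e_7,e_9\rangle$ are five-dimensional and totally $q_0$-isotropic. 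Here $(Q_1^{(3)})_{11}=q_1(y)>0$ and $(Q_1^{(3)})_{22}=q_1(z)<0$; letting $\sigma$ be the sign of $(Q_1^{(3)})_{33}$, Step~16 takes $W=W_2$ if $\sigma>0$ and $W=W_1$ if $\sigma\leq 0$. In either case $q_1|_W$ takes a strictly positive value (at $e_1$ or $e_3$) and a value $\leq 0$ at a nonzero vector (at $e_2$ or $e_3$), so $q_1|_W$ is not definite over $\RR$; a rational quadratic form in five variables that is not definite over $\RR$ is isotropic over $\QQ$, being either degenerate, so that a nonzero radical vector is a zero, or nondegenerate and indefinite, hence isotropic by Meyer's theorem. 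So one computes --- effectively, by \cite{PC} in the nondegenerate case --- a nonzero rational $x\in W$ with $q_1|_W(x)=0$; since $W$ is $q_0$-isotropic, $q_0(x)=0$ too, and $xP$ is then a nonzero rational vector annihilating $q_0+\lambda_0q_1$ and $\pm q_1$, hence both original forms. This $xP$ is the output.

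I expect the heart of the argument to be precisely this last descent: verifying that the sign data attached to $y$ and $z$, carried unchanged through Steps~1--7, together with the hyperbolic splitting of Steps~8--15, make it impossible for $q_1$ to be anisotropic on the chosen five-dimensional totally $q_0$-isotropic rational subspace, and confirming the accompanying count, which is exactly what pins the bound down to $n\geq 13$ (one needs five hyperbolic planes to reach coordinate $9$). The remaining steps are bookkeeping, much of it already carried out in the previous sections.
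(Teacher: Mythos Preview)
Your argument is correct and follows the same route as the paper's own proof: balance $q_0$, manufacture rational $q_0$-isotropic vectors $y,z$ with $q_1(y)>0>q_1(z)$ and $q_0(y,z)\neq 0$, split off hyperbolic planes, and solve $q_1$ on a five-dimensional totally $q_0$-isotropic rational subspace chosen so that $q_1$ is not definite there. Your write-up is in fact more careful than the paper's in justifying termination of the while loop in Step~7, in covering the case where $q_1|_W$ is degenerate, and in making explicit the hyperbolic-plane count that pins down $n\geq 13$; the only slip is a harmless off-by-one in your step references (what you call Step~16 is the algorithm's Step~15).
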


\begin{proof}

After Step $1$, $q_0(x)$ is balanced, so that in Step 2, such a  rational $y$ exists and after Step 3, we have $q_1(y)>0$. For Step $4$ such a real solution exists because $V(\RR)$ is nonempty. 
Steps 4,5 and 6 compute a rational vector $z$ such that $q_0(z)=0$ and $q_1(z) < 0$.
Step $7$ assures us that $y$ and $z$ are not orthogonal for $q_0$, then the intersection of $\langle y,z \rangle$ and
$\langle y,z \rangle ^{\perp_{q_0}}$ is nonzero. Therefore the matrix $P^{(1)}$ of Step $8$ is invertible. Step $13$ is possible because  
$Q_0^{(2)}= \mathbb{H} \oplus Q_2$ is balanced with signature $[r,s]$ thus $Q_2$ is balanced with signature $[r-1,s-1]$ and dimension
$n-2 \geq 11$. The subspaces of the elements of the form $x=(0,x_2,x_3,0,x_5,0,x_7,0,x_9,0,0,0,\ldots)$ and $x=(x_1,0,x_3,0,x_5,0,x_7,0,x_9,0,0,0,\ldots)$ are both totally isotropic for $q_0^{(4)}$. To conclude we just need to
compute a solution of $q_1^{(4)}(x)=0$ in one of these subspaces. 
Since $(Q_1^{(4)})_{11}>0$ and $(Q_1^{(4)})_{22}<0$, the choice made in Step $15$ assures that $q_1^{(4)}(x)$ is indefinite on this subspace. 
Moreover, in this subspace $q_1^{(4)}(x)$ has $5$ variables and, by the Hasse principle, has rational solutions. This concludes the proof.

\end{proof}

\end{document}